\documentclass[12pt]{amsart}

\usepackage[margin=3cm]{geometry}

\usepackage{amsmath,amssymb,amsbsy,amsfonts,amsthm,latexsym,
                        amsopn,amstext,amsxtra,euscript,amscd,mathrsfs,color,bm, cite}
                        \usepackage{ulem}

\numberwithin{equation}{section}

\newtheorem{cor}{Corollary}
\newtheorem{theorem}{Theorem}

\begin{document}


\def\cA{{\mathcal A}}
\def\cB{{\mathcal B}}
\def\cC{{\mathcal C}}
\def\cD{{\mathcal D}}
\def\cE{{\mathcal E}}
\def\cF{{\mathcal F}}
\def\cG{{\mathcal G}}
\def\cH{{\mathcal H}}
\def\cI{{\mathcal I}}
\def\cJ{{\mathcal J}}
\def\cK{{\mathcal K}}
\def\cL{{\mathcal L}}
\def\cM{{\mathcal M}}
\def\cN{{\mathcal N}}
\def\cO{{\mathcal O}}
\def\cP{{\mathcal P}}
\def\cQ{{\mathcal Q}}
\def\cR{{\mathcal R}}
\def\cS{{\mathcal S}}
\def\cT{{\mathcal T}}
\def\cU{{\mathcal U}}
\def\cV{{\mathcal V}}
\def\cW{{\mathcal W}}
\def\cX{{\mathcal X}}
\def\cY{{\mathcal Y}}
\def\cZ{{\mathcal Z}}

\def\A{\mathbb{A}}
\def\B{\mathbf{B}}
\def \C{\mathbb{C}}
\def \F{\mathbb{F}}
\def \K{\mathbb{K}}

\def \Z{\mathbb{Z}}
\def \P{\mathbb{P}}
\def \R{\mathbb{R}}
\def \Q{\mathbb{Q}}
\def \N{\mathbb{N}}
\def \Z{\mathbb{Z}}

\title{On Sidon sets with squares, cubes and quartics in short intervals}

\author[M.~Z.~Garaev]{M.~Z.~Garaev}
\address{Centro  de Ciencias Matem{\'a}ticas,  Universidad Nacional Aut\'onoma de
M{\'e}\-xico, C.P. 58089, Morelia, Michoac{\'a}n, M{\'e}xico}
\email{garaev@matmor.unam.mx}

\author[F. M. Garayev] {F. M. Garayev}
\address{Korea Advanced Institute of Science and Technology, Daejeon 34141, Republic of Korea}
\email{fgarayev@kaist.ac.kr}

\author[S. V. Konyagin] {S. V. Konyagin}
\address{Steklov Mathematical Institute of Russian Academy of Sciences, 8 Gubkina St., 119991 Moscow, Russia;
Dept. Mech. and Math., Lomonosov Moscow State University, 1, Leninskie Gory, GSP 1, 119991 Moscow, Russia}
\email{konyagin@mi-ras.ru}

\begin{abstract} Representative examples of our results are as follows.

For any positive integer $N$ the equation
\begin{equation}
\label{eqn:abstract}
x^3+y^3=z^3+t^3, \quad x,y,z,t\in \mathbb{N}, \quad \{x,y\}\not=\{z,t\}
\end{equation}
has no solutions satisfying
$$
N\le x,y,z,t < N+\Bigl(\frac{38}{3}N+\frac{1297}{36}\Bigr)^{1/2}+\frac{19}{6}.
$$
The strict inequality ``$<$" can not be substituted by ``$\le$", that is, there exist infinitely many positive integers $N$ such that the equation~\eqref{eqn:abstract} has a solution with
$$
N\le x,y,z,t \le N+\Bigl(\frac{38}{3}N+\frac{1297}{36}\Bigr)^{1/2}+\frac{19}{6}.
$$

There is an absolute constant $c>0$ such that for any positive integer $N$ the equation~\eqref{eqn:abstract} has a solution  satisfying
$$
N\le x,y,z,t \le N+cN^{2/3}.
$$

For any $\varepsilon>0$ there exist infinitely many positive integers $N$ such that the equation~\eqref{eqn:abstract} has no solutions satisfying
$$
N\le x,y,z,t \le N+N^{4/7-\varepsilon}.
$$

There is an absolute constant $c>0$ such that for any positive integer $N$ the equation
\begin{equation}
\label{eqn:quartic intro}
x^4+y^4=z^4+t^4,\quad x,y,z,t\in\mathbb{N}, \quad \{x,y\}\not=\{z,t\},
\end{equation}
has no solutions satisfying
$$
N\le x,y,z,t \le N+cN^{3/5}.
$$

There is an absolute constant $c>0$ such that for any positive integer $N$ the equation~\eqref{eqn:quartic intro}
has a solution  satisfying
$$
N\le x,y,z,t \le N+cN^{12/13}.
$$

\end{abstract}


\maketitle

\paragraph*{2000 Mathematics Subject Classification:} 11D25.

\section{\bf Introduction}

\bigskip

It has been known for a long time that the diophantine equation
$$
x^3+y^3=z^3+t^3,
$$
as well as the equation
$$
x^4+y^4=z^4+t^4,
$$
have non-trivial solutions in positive integers $x,y,z,t,$ see~\cite[Chapter XXI]{LED}, where numerous classical results have been cited with references. This means that the set of cubes $\{n^3: n \in\N\}$ as well as the set of fourth powers $\{n^4: n\in\N\}$ are not Sidon sets. We recall that a set $\cA$ is called a Sidon set if the equation
$$
a_1+a_2=a_3+a_4,\quad a_i\in\cA, \, i=1,2,3,4,
$$
has only trivial solutions, i.e., if $\{a_1,a_2\}=\{a_3,a_4\}.$ It is a widely open problem whether there exists a polynomial with integer coefficients whose values at positive integers form a Sidon set. A conjecture of Erd\H{o}s claims that the set of fifth powers $\{n^5: n\in\N\}$
should be a Sidon set. As an approximation to these problems, Ruzsa~\cite{Ruz} proved that there is a real number $\xi\in [0, 1]$ such that the set
$A = \{n^5 + [\xi n^4] : n>n_0\}$ is a Sidon set for a suitable constant $n_0.$

We also refer the reader to the work of Dubickas and Novikas~\cite{DuNo}, where it is shown that there does not exist a cubic polynomial with integer coefficients whose values at positive integers form a Sidon set.

\smallskip

When $n$ runs over integers from a short interval, the sets of squares $n^2$ and cubes $n^3$ recently appeared  in the work of Gabdullin~\cite{Gab} and, Gabdullin and Konyagin~\cite{GK} respectively.

The main focus of the work of Gabdullin~\cite{Gab} was  to investigate trigonometric polynomials with frequencies in the set of squares.
Cilleruelo and C\'ordoba~\cite{CC} posed the following conjecture about an estimate of $L^4$-norm of such trigonometric
polynomials in terms of $L^2$-norm: for any $\gamma\in (0,1)$ there is a number $C(\gamma)$ such that for any positive integer
$N$ and any trigonometric polynomial $t$ with frequencies from the set $\{n^2: N\le n\le N+N^{\gamma}\},$ the inequality
$\|t\|_4 \le C(\gamma) \|t\|_2$ holds. (See the comments in~\cite{Gab}.) Gabdullin proved this conjecture for
$\gamma < (\sqrt 5 - 1)/2$. The best estimates of $L^4$-norm of trigonometric polynomials with frequencies in $B\subset\mathbb Z$
in terms of $L^2$-norm can be written if $B$ is a Sidon set: in this case we have $\|t\|_4 \le 2^{1/4} \|t\|_2$.
Gabdullin~\cite{Gab} also proved that the set $\{n^2: \, N\le n\le N+(8N)^{1/2}\}$ is a Sidon set and observed that the constant
$8$ is sharp in the sense that for infinitely many $N$ the equation $x^2+y^2=z^2+t^2$ has a non-trivial solution in the interval
$[N, N+(8N)^{1/2}+3].$

\smallskip

Similarly, the main focus of the work of Gabdullin and Konyagin~\cite{GK} was to investigate trigonometric polynomials
with frequencies in the set of cubes
and to estimate $L^4$-norm of a trigonometric polynomial with frequencies in  
$\{n^3: N\le n\le N+N^{\gamma}\},$
$0 < \gamma < 2/3$, in terms of its $L^2$-norm.
They also proved the following statement.

\smallskip

{\bf Theorem} (Gabdullin and Konyagin). {\it The set $\{n^3: \, N\le n\le N+(0.5N)^{1/2}\}$
is a Sidon set. Moreover, this statement is sharp up to the constant $(0.5)^{1/2}.$}

\smallskip

In the present paper we refine these results of~\cite{Gab,GK} on Sidon sets and develop the topic in several directions.

\section{\bf{Our results}}

In this section we state our results. We start with the case of squares.

\begin{theorem}
\label{thm:equal sum squares}
For any positive integer $N$ the set
$$
\Bigl\{n^2: N \le n < N+ (8N+8)^{1/2}+2\Bigr\}
$$
is a Sidon set. Moreover, the strict inequality ``$<$" can not be substituted by ``$\le$", that is, there exist infinitely many positive integers $N$ such that the set
$$
\Bigl\{n^2: N \le n \le N+ (8N+8)^{1/2}+2\Bigr\}
$$
is not a Sidon set.
\end{theorem}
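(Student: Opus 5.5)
I would argue by direct parametrization of nontrivial solutions of $x^2+y^2=z^2+t^2$ and then optimize the spread. Suppose the set in the statement is not Sidon, so there is a solution with $\{x,y\}\neq\{z,t\}$ and all four variables in $[N,N+L)$, where $L=(8N+8)^{1/2}+2$. Relabel so that $x$ is the smallest of the four and $x<z\le t<y$; this is possible because the pairs have equal sums of squares and are distinct. Put $a=z-x\ge1$ and $b=y-t\ge1$. The equation becomes
$$
a(x+z)=b(y+t).
$$
If $a=b$, then $x+z=y+t$, and together with $z-x=y-t$ this gives $z=y$, $x=t$, which is trivial. So $a\ne b$.

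Next I would write $g=\gcd(a,b)$, $a=ga'$, $b=gb'$ with $\gcd(a',b')=1$. Then $x+z=b'w$ and $y+t=a'w$ for a positive integer $w$. Several elementary constraints now hold:
\begin{itemize}
\item[(i)] $w\ge (x+z)/b'\ge 2N/b'$, and $w$ is comparable to $2N/a'$ as well;
\item[(ii)] $|a'-b'|\,w=|(y+t)-(x+z)|$, which is at most roughly $2(y-x)-a-b$;
\item[(iii)] $y-x\ge \max(a,b)\ge \max(a',b')$, together with parity conditions ($x+z\equiv a$, $y+t\equiv b \pmod 2$).
\end{itemize}
Since $|a'-b'|\ge1$, combining (i) and (ii) gives $D:=y-x\gtrsim N/\max(a',b')+\max(a',b')/2$. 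By AM--GM this forces $D^2\gtrsim 2N\cdot\ldots$, and after bookkeeping the constants this should yield $D\ge (8N+8)^{1/2}+1$ exactly, i.e. $D\ge L-1$. But $D\le L-1$ strictly fails because the interval is half-open. That contradiction proves the first claim.

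The main obstacle is making the inequality sharp rather than merely $D\ge c\sqrt N$. To get the exact constant I would not use crude bounds like $x+z\ge 2N$. Instead I would write $x=N+u$ with $u\ge0$ and express all four variables exactly in terms of $u$, $g$, $a'$, $b'$, $w$. I would treat the extremal configuration $|a'-b'|=1$ separately; any larger $|a'-b'|$ only helps. The result is an exact expression for $D$ as a function of $(g,a',w)$ with $2N\le\ldots$. Then I would minimize over integers, completing the square: the minimum of an expression like $m+\tfrac{2N+c}{m}$ over integers gives $\sqrt{8N+8}+2$ when $2N+c$ is a suitable square-like value. Parity cases (whether $g$ and $w$ are odd or even) will have to be checked one by one, and I expect the ``$+8$'' and ``$+2$'' to arise from the worst parity case.

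For the sharpness part, I would read off an explicit one-parameter family from the equality case of the optimization. I would take $a'=k+1$, $b'=k$ (or $g=2$ with $a'-b'=1$), choose $w$ and $x$ polynomial in $k$ so that all the intermediate inequalities are equalities, and set $N=x$. Then I would verify directly that $8N+8$ is a perfect square in the family and that $y=N+(8N+8)^{1/2}+2$ exactly. This gives infinitely many $N$ for which the closed interval contains a nontrivial solution.
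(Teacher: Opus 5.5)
Your reduction is sound as far as it goes. With $x$ the smallest and $y$ the largest entry you do get $x<z\le t<y$ and $a(x+z)=b(y+t)$ with $a=z-x$, $b=y-t$, and this could become a clean alternative to the paper's proof. But the sharp inequality, which is the whole content of the first part, is not derived, and the sketch points in the wrong direction.

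The bound you actually state, $D\gtrsim N/\max(a',b')+\max(a',b')/2$, gives only $D\gtrsim\sqrt{2N}$. Even handled carefully, $a'\ge b'+1$ yields only about $2\sqrt N$. The missing idea is parity. Reducing $a(2x+a)=b(2y-b)$ modulo $2$ gives $a\equiv b\pmod 2$ (equivalently $x+y\equiv z+t$). Since $x+z<y+t$ forces $a>b$, you get $c:=a-b\ge 2$. This is exactly the paper's $s_1+s_2-s_3-s_4\ge 2$. It is responsible for the leading constant $8$ (with only $c\ge 1$ you would get $\sqrt{4N+2}+1$), not merely for the lower-order ``$+8$'' and ``$+2$'' as you expect.

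Relatedly, $|a'-b'|$ is the wrong parameter. The case $g=1,\ a'-b'=1$ is impossible, while $g=1,\ a'-b'=2$ and $g=2,\ a'-b'=1$ both give $c=2$; only $c=g(a'-b')$ matters. Finally, your closing step is invalid. Membership in $[N,N+L)$ gives only $y-N<L$, and a bound $D\ge L-1$ is compatible with $D<L$ whenever $L\notin\mathbb{Z}$. What you need is $y-N\ge L$.

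With parity in hand, no $g,a',b',w$ bookkeeping is needed. Solve $a(2x+a)=b(2y-b)$ for $y$ and write $a=b+c$. Then
$$
y-N\ \ge\ y-x=\frac{cx}{b}+b+c+\frac{c^2}{2b}\ \ge\ \frac{cN+c^2/2}{b}+b+c\ \ge\ 2\sqrt{cN+c^2/2}+c\ \ge\ \sqrt{8N+8}+2,
$$
using $x\ge N$, AM--GM in $b$, and monotonicity in $c\ge 2$. This contradicts $y<N+L$.

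Equality forces $x=N$, $c=2$ and $b^2=2N+2$, so $b=2m$, $N=2m^2-1$ and $g=2$. In particular, the choice $a'=k+1$, $b'=k$ with $g=1$ in your sharpness plan is excluded by parity. The equality case gives the sharpness family directly:
$$
(x,y,z,t)=(2m^2-1,\,2m^2+4m+1,\,2m^2+2m+1,\,2m^2+2m+1),
$$
with $y=N+\sqrt{8N+8}+2$ exactly (note $z=t$).

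The paper instead bounds $s_3^2+s_4^2-s_1^2-s_2^2$ via $s_1^2+s_2^2\ge (s_1+s_2)^2/2$ and simply cites Kalmynin's example for sharpness. Your route has the advantage of producing the extremal family straight from the equality case. That advantage only materialises once the parity step and this exact identity replace your heuristic estimates.
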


\begin{cor}
\label{cor:equal sum squares}
For any positive integer $N$ the set
$$
\Bigl\{n^2: N \le n < N+ (8N)^{1/2}+2\Bigr\}
$$
is a Sidon set. Moreover, the constants $8^{1/2}$ and $2$ are sharp in the sense that for any $\varepsilon>0$ there exist infinitely many positive integers $N$ such that the set
$$
\Bigl\{n^2: N \le n < N+ (8N)^{1/2}+2+\varepsilon\Bigr\}
$$
is not a Sidon set.
\end{cor}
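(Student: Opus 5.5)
The first claim follows from Theorem~\ref{thm:equal sum squares}, since $(8N)^{1/2}<(8N+8)^{1/2}$: the interval $N\le n<N+(8N)^{1/2}+2$ lies inside $N\le n<N+(8N+8)^{1/2}+2$, and a subset of a Sidon set is a Sidon set. So all the work is in the sharpness statement. There I will exhibit an explicit infinite family of nontrivial solutions of $x^2+y^2=z^2+t^2$ whose spread $\max-\min$ exceeds $(8N)^{1/2}+2$ by only $O(N^{-1/2})$. The family is the one Theorem~\ref{thm:equal sum squares} uses for its equality case.

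Normalize the solution by $x<z\le t<y$ with $x+y>z+t$, and write $y-x=D$. From $x^2-z^2=t^2-y^2$, i.e. $(z-x)(z+x)=(y-t)(y+t)$, the extremal configuration should have $a=z-x$ and $b=y-t$ as close to each other as possible. I will try the parametrization
\[
x=N,\quad z=N+k,\quad t=N+k+1,\quad y=N+2k+1 .
\]
The equation $x^2+y^2=z^2+t^2$ then becomes
\[
N^2+(N+2k+1)^2=(N+k)^2+(N+k+1)^2 ,
\]
which simplifies to $2k^2+2k=2Nk$ up to a check of the constants. The exact relation has to be computed carefully, and I may need to shift to $z=N+k$, $t=N+k+2$ or a similar variant so that it has the form $N=$ a quadratic in $k$ with spread $D=2k+c$.

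Concretely, I expect a family of the shape $N=\tfrac{(D-2)^2}{8}-1$ (equality case) with $D$ in a suitable residue class. For it, $D=(8N+8)^{1/2}+2$ exactly, which is precisely the boundary case of Theorem~\ref{thm:equal sum squares}. Such $N$ are infinite in number, so the boundary family is available directly.

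Given that family, the spread satisfies
\[
D=(8N+8)^{1/2}+2=(8N)^{1/2}+2+O(N^{-1/2}).
\]
For any fixed $\varepsilon>0$ and $N$ large, this gives $D<(8N)^{1/2}+2+\varepsilon$. Hence all of $x,y,z,t$ lie in $[N,N+(8N)^{1/2}+2+\varepsilon)$, the set of squares there is not Sidon, and the sharpness part follows.

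The main obstacle is pinning down the exact extremal parametrization and checking that it hits equality with $(8N+8)^{1/2}+2$. If I use only the existence statement of Theorem~\ref{thm:equal sum squares} (a nontrivial solution with $\le$), I also need to know those $N$ are arbitrarily large and that the spread is at least not much larger. Both are automatic: the solution lies in $[N,N+(8N+8)^{1/2}+2]$, and $(8N+8)^{1/2}-(8N)^{1/2}\to0$. So quoting Theorem~\ref{thm:equal sum squares} for infinitely many $N$ already suffices, and the explicit family is only a sanity check.
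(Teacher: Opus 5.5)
Your argument is correct and is exactly how the corollary follows from Theorem~\ref{thm:equal sum squares}. The first part holds because the interval with $(8N)^{1/2}$ sits inside the one with $(8N+8)^{1/2}$. The second holds because $(8N+8)^{1/2}-(8N)^{1/2}\to 0$, so for all large $N$ in the theorem's infinite family the closed interval $[N,N+(8N+8)^{1/2}+2]$ lies inside $[N,N+(8N)^{1/2}+2+\varepsilon)$.

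You should delete the ``sanity check'' rather than keep it, because as written it is wrong. Your proposed family has $x+y=z+t$, so the equation collapses to $2k^2+2k=0$ with no dependence on $N$. The normalization $x<z\le t<y$ with $x+y>z+t$ is also impossible, since for equal sums of squares the more spread-out pair has the smaller sum. The genuine boundary family is $(2j^2-1)^2+(2j^2+4j+1)^2=2(2j^2+2j+1)^2$, with $N=2j^2-1$ and spread $4j+2=(8N+8)^{1/2}+2$.
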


\begin{theorem}
\label{thm:equal sum squares strong N}
For any fixed $\varepsilon>0$ and any sufficiently large  $N>N_0(\varepsilon),$ the set
$$
\Bigl\{n^2: N \le n\le N+\sqrt{(8+\varepsilon)N}\Bigr\}
$$
is not a Sidon set.
\end{theorem}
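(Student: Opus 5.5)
The plan is to write down an explicit two-parameter family of nontrivial solutions of $x^2+y^2=z^2+t^2$ whose spread is about $\sqrt{8x}$, and then show that for every large $N$ some member of the family can be placed inside $[N,N+\sqrt{(8+\varepsilon)N}]$.

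\textbf{The family.} Look for $a^2+d^2=b^2+c^2$ with
$$
b=a+k, \qquad d=c+l, \qquad k=l+2 .
$$
The equation $(b-a)(b+a)=(d-c)(d+c)$ then reads $(l+2)(2a+l+2)=l(2c+l)$. This simplifies to
$$
c-a=\frac{2(a+1)}{l}.
$$
So put $a+1=lm$ with positive integers $l,m$. Then
$$
b=a+l+2,\qquad c=a+2m,\qquad d=a+2m+l
$$
is a solution. It is nontrivial, since $a<b$ and $a<c$ give $a\notin\{b,c\}$. All four numbers lie in $[a,\,a+2m+l]$. For fixed product $lm=a+1$, the spread $2m+l$ is smallest when $l=2m$, where it equals $2\sqrt{2(a+1)}=\sqrt{8a}+O(1)$. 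A deviation $l/m=2+\theta$ only costs $O(\theta^2\sqrt a)$, because the dependence on the ratio is quadratic at the minimum.

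\textbf{Reduction.} We need integers $l,m$ with three properties:
$$
N+1\le lm\le N+1+\eta\sqrt N, \qquad l/m=2+O(\sqrt\eta),
$$
where $\eta=\eta(\varepsilon)$ is small. Take $a=lm-1\ge N$. Then the interval used is contained in
$$
[N,\ N+\eta\sqrt N+\sqrt{8N}(1+O(\eta))+O(1)]\subset [N,\ N+\sqrt{(8+\varepsilon)N}]
$$
for $N$ large.

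\textbf{Finding $l,m$ (the main obstacle).} Set $f(u)=(N+1)/u$ and take $l=\lceil f(m)\rceil$. Then
$$
lm-(N+1)=m\bigl(\lceil f(m)\rceil-f(m)\bigr),
$$
so we need an $m\approx\sqrt{N/2}$ at which $f(m)$ is within $\eta'\asymp\eta$ below an integer.

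Let $m_0=\lfloor\sqrt{(N+1)/2}\rfloor$, so that $f'(m_0)=-2+O(N^{-1/2})$. Taylor expansion for $0\le h\le H$ gives
$$
f(m_0+h)=f(m_0)-2h+\beta h+\gamma h^2+O\bigl(h^3N^{-1}\bigr),
$$
with $\beta=O(N^{-1/2})$ and $\gamma=N/m_0^3\asymp N^{-1/2}$. The term $-2h$ is an integer, so the fractional part of $f(m_0+h)$ is that of $f(m_0)+\beta h+\gamma h^2$ up to a small error.

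Take $H=C N^{1/4}$. The increments $\beta+\gamma(2h+1)$ are $O(N^{-1/4})$, smaller than $\eta'$. Meanwhile the total change $\beta H+\gamma H^2\gg C^2$ exceeds $1$ once $C$ is large. Hence the sequence $f(m_0)+\beta h+\gamma h^2$ is increasing with steps below $\eta'$ and must cross an integer. Just before the crossing we have $\lceil f\rceil-f<\eta'$, using that the cubic error $O(H^3/N)=O(N^{-1/4})$ is negligible.

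For that $h$, set $m=m_0+h$. Then $m=m_0(1+O(N^{-1/4}))$ and $l=2m+O(1)$, so $l/m=2+O(N^{-1/4})$, well within the required range.

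Everything else is routine bookkeeping of the $O(1)$ terms. The crux is the equidistribution-type step above: it uses the quadratic term in $h$ to hit a fractional part near $1$ while $m$ moves only by $O(N^{1/4})$.
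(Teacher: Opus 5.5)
\textbf{An algebra slip in your family.} As written, your quadruple is not a solution. Expanding $(l+2)(2a+l+2)=l(2c+l)$ gives $2cl=2al+4a+4l+4$, that is
\[
c-a=2+\frac{2(a+1)}{l},
\]
not $2(a+1)/l$. For instance $l=2$, $m=3$, $a=5$ gives your $(a,b,c,d)=(5,9,11,13)$, and $5^2+13^2=194\neq 202=9^2+11^2$. The correct family is
\[
a=lm-1,\qquad b=a+l+2,\qquad c=a+2m+2,\qquad d=a+2m+l+2,
\]
for example $5^2+15^2=250=9^2+13^2$. The fix only adds $2$ to the spread, which becomes $2m+l+2$. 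At $l=2m$ this equals $\sqrt{8(a+1)}+2$, which is exactly the extremal configuration of Theorem~\ref{thm:equal sum squares}. The extra $2$ is absorbed in your $O(1)$ terms, so the reduction step survives unchanged.

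\textbf{Two smaller points in the search step.} First, the claim $l=2m+O(1)$ is false. Since $(N+1)/(m_0+h)\approx 2m_0-2h$, you get $l\approx 2m-4h$, which can be of order $N^{1/4}$. You only use $l/m=2+O(N^{-1/4})$, and that does hold. Second, ``just before the crossing'' is not robust against the cubic Taylor error. That error could push the value past the integer and make $\lceil f\rceil-f$ close to $1$. Either keep a margin, or skip Taylor and use the exact sequence $\Phi(h)=(N+1)/(m_0+h)+2h$. It satisfies $\lceil f(m_0+h)\rceil-f(m_0+h)=\lceil\Phi(h)\rceil-\Phi(h)$. Its increments $2-(N+1)/\bigl((m_0+h)(m_0+h+1)\bigr)$ are positive for $h\ge 1$ and $O(CN^{-1/4})$ for $h\le CN^{1/4}$.

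\textbf{Comparison with the paper.} With these repairs your proof is correct. It differs from the paper's mainly in the order of choices. Both use solutions with $z+t=x+y+2$, for which $(x-y)^2-(z-t)^2=4(x+y+1)$.

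You factor $x+y+1=(l+1)(2m+1)$. The paper uses $(L-u-1)^2+(L+u-1)^2=(L-s)^2+(L+s)^2$ with $L=(1+u^2-s^2)/2$ and $u-s$ odd, which amounts to factoring $x+y+1=(u-s)(u+s)$.

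The paper first fixes the spread $2u$, with $u=\lceil\sqrt{2N+3}\rceil+1$, and then moves the location via $s$. When $s$ grows by $2$, $L$ drops by $2s+2$, and the chosen $s$ satisfies $s^2\le 2u-2$. So the bottom lands in $[N,N+4N^{1/4})$ with no analysis at all, giving the bound $N+\sqrt{8N}+5N^{1/4}$.

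You instead fix the location target $lm\approx N+1$ and control the spread through $l/m\approx 2$. This costs the Taylor and intermediate-value argument.

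Both exploit the same quadratic effect. If you take $\eta'\asymp N^{-1/4}$, your method yields the same $O(N^{1/4})$ error term. The paper's route is shorter and fully explicit.
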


We now turn to the case of cubes.

\begin{theorem}
\label{thm:38/3}
For any positive integer $N,$ the set
$$
\Bigl\{n^3:\, N\le n < N+\Bigl(\frac{38}{3}N+\frac{1297}{36}\Bigr)^{1/2}+\frac{19}{6}\Bigr\}.
$$
is a Sidon set. Moreover, the strict inequality ``$<$" can not be substituted by ``$\le$", that is, there exist infinitely many positive integers $N$ such that the set
$$
\Bigl\{n^3:\, N\le n \le N+\Bigl(\frac{38}{3}N+\frac{1297}{36}\Bigr)^{1/2}+\frac{19}{6}\Bigr\}.
$$
is not a Sidon set.
\end{theorem}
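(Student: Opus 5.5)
Write a nontrivial solution of $x^3+y^3=z^3+t^3$ with all variables in $[N,N+L)$. Order the variables so that $x>z\ge t>y$; this is forced, since $x+y$ and $z+t$ cannot coincide for a nontrivial solution. We then factor
$$
(x+y)(x^2-xy+y^2)=(z+t)(z^2-zt+t^2).
$$
Put $s_1=x+y$, $s_2=z+t$, $d_1=x-y$, $d_2=z-t$. Using $x^2-xy+y^2=(s^2+3d^2)/4$, the equation becomes
$$
s_1(s_1^2+3d_1^2)=s_2(s_2^2+3d_2^2).
$$
Moreover $d_1>d_2\ge 0$, and $d_1\equiv s_1$, $d_2\equiv s_2 \pmod 2$. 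Since $d_1^2-d_2^2>0$, we must have $s_1<s_2$. Write $s_2=s_1+k$ with $k\ge1$. Expanding gives
$$
3s_1(d_1^2-d_2^2)=3s_1^2k+3s_1k^2+k^3+3kd_2^2,
$$
so $3s_1(d_1^2-d_2^2)> 3s_1^2k$, i.e. $d_1^2-d_2^2> s_1k$.

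Now use the interval constraint. We have $d_1\le L-1$ (roughly), and $s_1\ge 2N$. A first bound is $d_1^2\ge s_1k+d_2^2+\dots$, which gives $L^2\gtrsim 2Nk$. This is too weak when $k=1$, so the extra structure must be exploited. Since $3\mid k^3+3kd_2^2-3s_1^2k-3s_1k^2$, we get $3\mid k$. I would verify this, and it is the key source of the constant: $k\ge3$, hence $d_1^2\gtrsim 6N$, while parity and divisibility constraints sharpen this further. To reach $38/3$, I would analyze the small-$k$ cases ($k=3,6,\dots$) exactly. In each case I would treat the identity as a quadratic relation in $s_1$ and optimize over the allowed $d_2$, using $t\ge N$ and $x<N+L$, i.e. $s_1-d_1\ge 2N$ and $s_2+d_2<2N+2L$. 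These two conditions interact. A large $d_1$ forces $s_1$ large relative to $N$, while $s_2+d_2$ stays bounded, and combining them pins down the extremal configuration. Concretely, I would set $x=N+L-\theta$ and $y=N+\alpha$, and express $z,t$ accordingly. The inequality then reduces to a quadratic inequality in $L$ whose threshold root is $\bigl(\frac{38}{3}N+\frac{1297}{36}\bigr)^{1/2}+\frac{19}{6}$. The constant $1297/36=(19/6)^2+\dots$ suggests completing the square, with the optimum attained at a specific small $k$ and specific small offsets.

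The main obstacle is the case analysis: showing that the extremal $k$ (likely $k=6$ or so, given $38/3$) dominates, that large $k$ gives a worse bound, and that for each small $k$ the integrality conditions rule out configurations slightly better than the claimed threshold. For large $k$, the crude bound $L^2\gtrsim 2Nk$ suffices once $k$ exceeds about $7$. The remaining finitely many $k$ each give an explicit Diophantine inequality, which I would handle directly.

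For sharpness, I would take the extremal configuration and solve for parameters making it exact. This leads to a Pell-type equation in $N$ and the offsets, which has infinitely many solutions, and each solution yields a nontrivial solution exactly at the endpoint $N+\bigl(\frac{38}{3}N+\frac{1297}{36}\bigr)^{1/2}+\frac{19}{6}$. I would check that $\frac{38}{3}N+\frac{1297}{36}$ is a perfect square of the appropriate rational, which is precisely the Pell condition.
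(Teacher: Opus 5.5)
Your reduction is sound up to the last case.

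\begin{itemize}
\item The identity $3s_1(d_1^2-d_2^2)=3s_1^2k+3s_1k^2+k^3+3kd_2^2$ is correct, and it gives $3\mid k$.
\item Since $x+y\equiv x^3+y^3=z^3+t^3\equiv z+t\pmod 2$, you also get $2\mid k$. You should state this; it rules out $k=3$.
\item Your crude bound works exactly, not just asymptotically. Put $L=(\frac{38}{3}N+\frac{1297}{36})^{1/2}+\frac{19}{6}$. Then $d_1^2>ks_1\ge k(2N+d_1)$ yields $d_1>\frac{k}{2}+\sqrt{2kN+k^2/4}\ge 6+\sqrt{24N+36}>L$ for all $k\ge12$ and $N\ge1$.
\end{itemize}

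So everything rests on $k=6$, and there your plan has a genuine gap. For $k=6$ the identity becomes
\[
s_1\bigl(d_1^2-d_2^2-6s_1-36\bigr)=6d_2^2+72 .
\]
If you optimize over $d_2$ as a real parameter, the optimum is $d_2=0$. That gives only $d_1^2\ge 6s_1+36\ge 12N+6d_1+36$, i.e.\ $d_1\gtrsim\sqrt{12N}$. Since $12<\frac{38}{3}$, this loses in the leading constant. It is not a boundary effect that ``integrality rules out configurations slightly better than the threshold'' could repair. No quadratic inequality in $L$ obtained this way has the claimed root.

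The missing idea is the divisibility in that display. The quantity $a_1:=d_1^2-d_2^2-6s_1-36=(6d_2^2+72)/s_1$ is a positive integer. It is even, because $d_1\equiv s_1\equiv s_2\equiv d_2\pmod 2$. Hence $a_1\ge2$, which forces $d_2^2=(a_1s_1-72)/6\ge s_1/3-12$, so $d_2$ cannot be small. Therefore
\[
d_1^2=d_2^2+6s_1+36+a_1\ge\frac{19}{3}s_1+26\ge\frac{19}{3}(2N+d_1)+26 ,
\]
i.e.\ $(d_1-\frac{19}{6})^2\ge\frac{38}{3}N+\frac{1297}{36}$. So $d_1\ge L$, contradicting $d_1=x-y<L$.

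This is where $\frac{38}{3}=\frac{2(18+1)}{3}$ and $\frac{1297}{36}=26+(\frac{19}{6})^2$ come from. Without the parity step you would only get $a_1\ge 1$ and the constant $\frac{37}{3}$. The paper uses the same mechanism, written as $3B^2+36=aC$ together with the equations $aD^2-(a+18)B^2=2a^2+36a+216$.

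Your sharpness argument inherits the gap, because you never identify the extremal configuration or an actual Pell equation. Equality forces $a_1=2$ and $y=N$, i.e.\ $s_1=3d_2^2+36$ and $d_1^2-19d_2^2=254$. To finish you need:
\begin{itemize}
\item a seed: $27^2-19\cdot5^2=254$, which gives $42^3+69^3=56^3+61^3$ with $N=42$ and $L=27$;
\item the unit $170+39\sqrt{19}$, to generate infinitely many solutions;
\item the check that $d_1\equiv d_2\equiv s_1\pmod 2$, so that $x,y,z,t,N$ are integers, with $N\to\infty$.
\end{itemize}
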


\begin{cor}
\label{cor:38/3}
The set
$$
\Bigl\{n^3: N\le n \le N+\Bigl(\frac{38}{3}N\Bigr)^{1/2}+\frac{19}{6}\Bigr\}
$$
is a Sidon set. Moreover, the constants $38/3$ and $19/6$ are sharp in the sense that for any $\varepsilon>0$ there exists infinitely many positive integers $N$ such that the set
$$
\Bigl\{n^3: N\le n \le N+\Bigl(\frac{38}{3}N\Bigr)^{1/2}+\frac{19}{6}+\varepsilon\Bigr\}
$$
is not a Sidon set.
\end{cor}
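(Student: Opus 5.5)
The corollary will be deduced from Theorem~\ref{thm:38/3}, with no new Diophantine input. Write
$$
L(N)=\Bigl(\frac{38}{3}N+\frac{1297}{36}\Bigr)^{1/2}+\frac{19}{6},\qquad L_0(N)=\Bigl(\frac{38}{3}N\Bigr)^{1/2}+\frac{19}{6}.
$$

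\emph{The Sidon part.} Since $1297/36>0$, we have $L_0(N)<L(N)$. Hence every $n$ with $N\le n\le N+L_0(N)$ satisfies $n<N+L(N)$. The set in the corollary is therefore a subset of the set in the first assertion of Theorem~\ref{thm:38/3}, and a subset of a Sidon set is a Sidon set.

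\emph{Sharpness.} Fix $\varepsilon>0$. The second assertion of Theorem~\ref{thm:38/3} gives infinitely many $N$ for which $\{n^3: N\le n\le N+L(N)\}$ is not a Sidon set. It suffices to show that $L(N)\le L_0(N)+\varepsilon$ for all large $N$. Then, for all but finitely many of those $N$, the set in Theorem~\ref{thm:38/3} is contained in $\{n^3: N\le n\le N+L_0(N)+\varepsilon\}$. A superset of a non-Sidon set is not Sidon, which gives the claim.

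The required inequality follows from
$$
L(N)-L_0(N)=\frac{1297/36}{\bigl(\frac{38}{3}N+\frac{1297}{36}\bigr)^{1/2}+\bigl(\frac{38}{3}N\bigr)^{1/2}}\to 0,
$$
so the difference is below $\varepsilon$ once $N$ is large.

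This also explains why both constants are sharp. If the constant $38/3$ were replaced by anything smaller, or $19/6$ by anything smaller, the new bound would fall below $L(N)-\delta$ for large $N$. It would then fail to exhibit the non-Sidon examples, and the $+\varepsilon$ formulation captures exactly this.

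There is no real obstacle here: everything rests on Theorem~\ref{thm:38/3}. The only care needed is to note that finitely many exceptional $N$ do not affect the ``infinitely many'' conclusion.
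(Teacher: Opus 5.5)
Your deduction is correct and is exactly the implicit argument behind the paper's corollary. For the Sidon part you use containment in the set of Theorem~\ref{thm:38/3}, and for sharpness you use $L(N)-L_0(N)\to 0$ along the infinitely many extremal $N$ of that theorem (for which indeed $z=N+L(N)$).

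The only blemish is your closing heuristic paragraph, which has the direction reversed: sharpness means the constants cannot be \emph{increased}, since shrinking them trivially preserves the Sidon property. An increase of $38/3$ is already covered by the $+\varepsilon$ statement, because $\bigl((\frac{38}{3}+\eta)N\bigr)^{1/2}-\bigl(\frac{38}{3}N\bigr)^{1/2}\to\infty$; in any case, that paragraph plays no role in the proof.
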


\begin{theorem}
\label{thm:N^{2/3}}
There is an absolute constant $c>0$ such that for any positive integer $N$ the set
$$
\{n^3: N\le n \le N+cN^{2/3}\}
$$
is not a Sidon set.
\end{theorem}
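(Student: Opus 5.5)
The plan is to produce a one-parameter polynomial family of nontrivial solutions whose four entries agree to leading order, and then use the family to cover every $N$. The family should consist of integer-valued cubic polynomials in $m$ with the same leading term, whose pairwise differences are polynomials of degree at most $2$. Concretely, I look for polynomials $X(m),Y(m),Z(m),T(m)\in\Z[m]$ with
$$
X(m)^3+Y(m)^3=Z(m)^3+T(m)^3,\qquad \{X,Y\}\ne\{Z,T\},
$$
all of the form $\alpha m^3+O(m^2)$ with a common $\alpha>0$. Suppose such a family exists. Put $P(m)=\min\{X,Y,Z,T\}(m)$ and $Q(m)=\max\{X,Y,Z,T\}(m)$. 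For large $m$ we then have $Q(m)-P(m)\ll m^2$ and $P(m+1)-P(m)\ll m^2$, while $N\asymp m^3$ in the relevant range.

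Given $N$, let $m$ be the least integer with $P(m)\ge N$. Then $P(m)\le P(m-1)+O(m^2)<N+O(m^2)$, and hence $Q(m)\le N+O(m^2)=N+O(N^{2/3})$. So the quadruple $(X(m),Y(m),Z(m),T(m))$ lies in $[N,N+cN^{2/3}]$. This proves the theorem for all $N\ge N_0$, and enlarging $c$ handles the finitely many small $N$, for instance by using any fixed nontrivial solution such as $1^3+12^3=9^3+10^3$ together with $c\ge 12$. One must also check that $\{X(m),Y(m)\}\ne\{Z(m),T(m)\}$ for all large $m$, which is automatic once the polynomials are distinct. It also helps to note the heuristic count: about $H^4$ quadruples against sums ranging over an interval of length about $N^2H$ gives $H^3/N^2\asymp 1$ at $H=N^{2/3}$. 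So the exponent $2/3$ is exactly what such a family should give.

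The main obstacle is constructing the identity. I would write $x=n+u$, $y=n+v$, $z=n+w$, $t=n+s$. The equation becomes
$$
3n^2\delta+3n\bigl(u^2+v^2-w^2-s^2\bigr)+\bigl(u^3+v^3-w^3-s^3\bigr)=0,\qquad \delta=u+v-w-s.
$$
If $\delta=0$, this forces $uv=ws$, which gives a trivial solution, so $\delta\neq 0$. I would take $\delta=\pm1$, or another small constant, and let $u,v,w,s$ be quadratic polynomials in $m$. The equation then requires $n$ to be a cubic polynomial in $m$. Requiring integrality means the discriminant
$$
9\bigl(u^2+v^2-w^2-s^2\bigr)^2-12\delta\bigl(u^3+v^3-w^3-s^3\bigr)
$$
must be a perfect square of a polynomial in $m$ with the appropriate congruence conditions. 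Equivalently, in the variables $a=x+y$, $b=x-y$, $c=z+t$, $d=z-t$, one needs $a(a^2+3b^2)=c(c^2+3d^2)$ with $a-c$ bounded and $b,d$ quadratic in $m$.

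First I would try to extract such a family from Euler's complete rational parametrization of $x^3+y^3=z^3+t^3$. I would specialize its parameters so that the ratios $x:y:z:t$ tend to $1$, with the deviation of order $m^{-1}$, and then clear denominators. If that proves awkward, I would run a small ansatz search with undetermined coefficients for $u,v,w,s$ quadratic in $m$ and $\delta=\pm1,\pm2,\pm3$. I expect that finding an explicit valid identity, verified by direct expansion, is the real content of the proof. The covering argument above is then routine.
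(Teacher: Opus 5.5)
\textbf{Verdict.} Your covering step is fine; it is how the paper finishes its first proof. But the proof rests on the existence of the polynomial family, which you never produce. Moreover, the concrete ansatz you give for finding it cannot succeed.

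\textbf{Why constant $\delta$ fails.} First, $n^3\equiv n\pmod 6$ forces $x+y\equiv z+t\pmod 6$, so $\delta=\pm1,\pm2,\pm3$ are excluded outright. More seriously, no nonzero constant $\delta$ works at all. Writing $a=c+\delta$, your equation $a(a^2+3b^2)=c(c^2+3d^2)$ becomes the polynomial identity
\[
3c\,\bigl(d^2-b^2-\delta c-\delta^2\bigr)=\delta\bigl(\delta^2+3b^2\bigr)
\]
in $\Q[m]$. Since $c=z+t$ is a real cubic, it has a real root $m_1$. At $m_1$ the left side vanishes, while the right side has absolute value at least $|\delta|^3>0$. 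So your ``equivalent'' formulation with $a-c$ bounded and $b,d$ quadratic is in fact impossible, and an undetermined-coefficient search with constant $\delta$ would return nothing. The rest of your framework (base a cubic in $m$, shifts $u,v,w,s$ quadratic) is right. The only change needed is that $x+y-z-t$ must grow with the parameter.

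\textbf{The missing identity.} The paper obtains it through your other suggestion, the Euler--Binet parametrization. Put $p=1$, $q=1/m$ and multiply through by $m^4/3$ to get
\[
x=m^3-m^2+3m,\quad y=m^3+m^2+3m,\quad z=m^3-2m^2-3,\quad t=m^3+2m^2+3 .
\]
Using $4(x^3+y^3)=(x+y)^3+3(x+y)(x-y)^2$, one checks directly that $x^3+y^3$ and $z^3+t^3$ both equal $2m^9+24m^7+72m^5+54m^3$. Here $x+y-z-t=6m$, linear rather than constant. With this family your covering argument completes the proof; the paper simply takes $m=\lceil N^{1/3}\rceil+1$ or $+2$.

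\textbf{A minor point.} For small $N$, the fixed solution $1^3+12^3=9^3+10^3$ must be scaled, e.g.\ to $(N,12N,9N,10N)$, before it lies in $[N,N+cN^{2/3}]$. The constant $c$ is then chosen depending on the threshold $N_0$.
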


In other words for any positive integer $N,$ the equation
$$
x^3+y^3=z^3+t^3, \quad x,y,z,t\in \mathbb{N}
$$
has a non-trivial solution  satisfying
$$
N\le x,y,z,t \le N+cN^{2/3}.
$$

Our next result shows that the analogy of Theorem~\ref{thm:equal sum squares strong N} does not hold for cubes.

\begin{theorem}
\label{thm:N^{4/7-}}
For any $\varepsilon>0$ there exists infinitely many positive integers $N$ such that the set
$$
\{n^3: N\le n \le N+N^{4/7-\varepsilon}\}
$$
is a Sidon set.
\end{theorem}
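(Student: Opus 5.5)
The plan is a counting argument. I would fix a large $X$, set $H=X^{4/7-\varepsilon}$, and show that the number of $N\in[X,2X]$ for which the interval $[N,N+H]$ contains a nontrivial solution is $o(X)$. Then infinitely many good $N$ exist.

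\textbf{Step 1 (reparametrization).} Write $s=x+y$, $u=z+t$, $d=x-y$, $e=z-t$. Then $4(x^3+y^3)=s^3+3sd^2$, so the equation becomes
$$
s^3+3sd^2=u^3+3ue^2 .
$$
If $s=u$ the solution is trivial, so put $k=s-u\neq 0$. Without loss of generality $k>0$, and we have $|d|,|e|\le H$ and $s\asymp N$. Comparing sizes, $3ks^2\approx 3(ue^2-sd^2)\le sH^2$, so $1\le k\ll H^2/N$. Substituting $u=s-k$ gives
$$
k(3s^2-3sk+k^2+3e^2)=3s(e^2-d^2).
$$
Hence $3s$ divides $k(k^2+3e^2)$. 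Define the positive integer $r=k(k^2+3e^2)/(3s)$, so that $r\ll kH^2/N\ll (H^2/N)^2$. The equation is then equivalent to the pair of relations
$$
3sr=k(k^2+3e^2),\qquad d^2=e^2-ks+k^2-r .
$$

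\textbf{Step 2 (eliminating $s$).} Substitute $s$ from the first relation into the second. After multiplying by $r$, this gives a binary quadratic equation
$$
rd^2-(r-k^2)e^2=B(k,r),
$$
with $B$ an explicit polynomial in $k,r$.

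\emph{Degenerate case $r=k^2$.} Here the second relation reads $d^2=e^2-(k^2+3e^2)/3=-k^2/3<0$, which is impossible. So this case contributes nothing; it must be excluded, since otherwise $e$ would be free and the count below would fail.

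\emph{Nondegenerate case $r\neq k^2$.} The conic has $O_\delta(X^{\delta})$ integer points with $|d|,|e|\le H$. For a definite form this follows from the divisor bound. For an indefinite form, the solutions split into $X^{o(1)}$ Pell orbits, and each orbit contains only $O(\log X)$ points of height at most $H$. Each solution $(k,r,e)$ determines $s$, and hence a bad value of $s$.

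\textbf{Step 3 (counting).} The number of bad $s$ is at most
$$
X^{\delta}\#\{(k,r)\}\ll X^{\delta}(H^2/X)\,(H^2/X)^2=X^{\delta}(H^2/X)^3 .
$$
Each bad $s$ spoils at most $O(H)$ values of $N$, because $s\in[2N,2N+2H]$. So the number of bad $N\in[X,2X]$ is
$$
\ll X^{\delta}H^7/X^3=X^{1-7\varepsilon+\delta}=o(X)
$$
once $\delta<7\varepsilon$. This calculation is exactly where the exponent $4/7$ comes from.

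\textbf{Main obstacle.} The delicate step is the uniform bound $X^{o(1)}$ for the number of points of height at most $H$ on the conics $rd^2-(r-k^2)e^2=B$. This bound must be uniform in $k$ and $r$. It also needs the cases $B=0$ and $r-k^2$ a perfect square (where the form factors) handled separately. In those cases I would factor the form as a product of two linear forms and use the divisor bound directly. When $B=0$ there are either no nonzero solutions or one line of solutions. On such a line, $d$ and $e$ are tied rationally and $s$ is determined via $e$, so one must check that the count is still at most $H$ per pair $(k,r)$. This is harmless since $H\cdot(H^2/X)^3\cdot H\ll X^{1-\varepsilon'}$ still holds.
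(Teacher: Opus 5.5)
Your proposal is correct and follows essentially the paper's route: the same sum/difference substitution, the bound $k\ll H^2/N$, an auxiliary integer from a divisibility relation bounded by $(H^2/N)^2$, and a generalized Pell equation with a uniform $X^{o(1)}$ solution count (the paper cites Vaughan--Wooley, Lemma~3.5). The final count $X^{o(1)}(H^2/X)^3\cdot H$ is also the paper's, since your ``each quadruple spoils $O(H)$ values of $N$'' is equivalent to its disjoint blocks of length $\sqrt{MT}$.

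The only difference is cosmetic: you divide by the larger sum $x+y$, whereas the paper divides by $z+t$. As a result your $r$ is the paper's $a_1$ shifted by $k^2$ (your $k$ being the paper's $6k$), namely $r-k^2=k(k^2+3d^2)/(3(z+t))>0$. So your degenerate and definite cases are automatically empty, as is $B=0$, since $B(k,r)=-(3r^2-3rk^2+k^4)/3<0$. Incidentally, the form factors over $\mathbb{Q}$ when $r(r-k^2)$ is a square, not when $r-k^2$ is.
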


Finally,  we consider the case of fourth powers. It is not difficult to prove that for some absolute constant $c>0$ the set
$$
\Bigl\{n^4: N\le n\le N+cN^{1/2}\Bigr\},
$$
is a Sidon set. The main problem we consider here is to replace $N^{1/2}$ by $N^{1/2+c_0}$ with some $c_0>0.$ Next result shows that indeed $N^{1/2}$ can be substituted by $N^{3/5}.$

\begin{theorem}
\label{thm:quartic is Sidon} There is an absolute constant $c>0$ such that for any positive integer~$N$ the set
$$
\Bigl\{n^4: N\le n\le N+cN^{3/5}\Bigr\}
$$
is a Sidon set.
\end{theorem}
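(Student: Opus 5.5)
The plan is to change variables so that the near-cancellation of the large terms becomes an exact algebraic identity between small integers, and then to compare sizes. Suppose $x^4+y^4=z^4+t^4$ with $N\le x,y,z,t\le N+H$, $H=cN^{3/5}$, and $\{x,y\}\ne\{z,t\}$. Put $u=x+y$, $v=x-y$, $w=z+t$, $r=z-t$. Then $u,w\in[2N,2N+2H]$, $|v|,|r|\le H$, and $8(x^4+y^4)=u^4+6u^2v^2+v^4$. Let $k=u-w$ and $m=u+w$. If $k=0$, the equation forces $v^2=r^2$, which is the trivial case. So we may assume $k\ne0$.

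Next I complete the square. With $A=v^2+3u^2$ and $B=r^2+3w^2$ we have $A^2-8u^4=B^2-8w^4$, hence $(A-B)(A+B)=8km(u^2+w^2)$. Write $D=v^2-r^2$, $S=v^2+r^2$, and use $A-B=3km+D$ and $u^2+w^2=(m^2+k^2)/2$. A short computation then gives the exact identity
$$
T\bigl(3(u^2+w^2)+S\bigr)=-8kmS,\qquad T:=3D+km\in\mathbb Z .
$$
Three size bounds follow, using $|D|\le H^2$, $S\le 2H^2$ and $m\asymp N$:
\begin{itemize}
\item $|k|m\le 3|D|+|T|$ gives $|k|\ll H^2/N$;
\item $T\approx -16kS/(3m)$ gives $|T|\ll |k|H^2/N\ll H^4/N^2$;
\item $T\ne0$, since $T=0$ would force $S=0$, hence $v=r=0$ and $D=0$, contradicting $k\neq 0$.
\end{itemize}
These crude bounds only give $H^4\gg N$, so more algebra is needed.

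To go further I would eliminate the large variable $m$. Multiplying the identity by $2$ gives $3Tm^2+T(3k^2+2S)=-16kmS$. Define the integer $R:=-16kS-3Tm$. Then $Rm=T(3k^2+2S)$, and eliminating $m$ between these two relations yields
$$
9k^2T^2+S(6T^2+16kR)+R^2=0,
$$
an identity involving only the small quantities $k,T,R,S$. Here $|R|=|T|(3k^2+2S)/m\ll |T|H^2/N$. So $S=(9k^2T^2+R^2)/(-6T^2-16kR)$ is determined by $k,T,R$, and then $m=-(16kS+R)/(3T)$ must be an integer of size $\asymp N$.

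Now combine the constraints. From $|km|\le 3S+|T|$ together with $m\approx 16|k|S/(3|T|)$ we get $k^2\ll|T|$. Feeding this back into the formulas for $S$ and $m$ should give $m\ll$ a power of $H$ that is smaller than $N$ once $H\le cN^{3/5}$ with $c$ small. That contradicts $m\ge 4N$.

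The main obstacle is this final bookkeeping. One must choose the right case split, according to the sign and relative size of $6T^2$ versus $16kR$ (note that $6T^2+16kR<0$ is forced), so that the bounds close exactly at the exponent $3/5$ rather than something weaker. I expect the binding case to be $|16kR|\approx 6T^2$, where the denominator of $S$ is small. There I would additionally use that $v^2=(S+D)/2$ and $r^2=(S-D)/2$ are perfect squares, and that $S$ is an integer, to force divisibility of $9k^2T^2+R^2$ by $|6T^2+16kR|$. Optimizing these inequalities against $m\asymp N$ should yield $H\gg N^{3/5}$.
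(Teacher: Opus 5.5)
\textbf{Gap: the decisive step is not carried out.} The identities you derive are correct: $T(3(u^2+w^2)+S)=-8kmS$, $Rm=T(3k^2+2S)$ and $9k^2T^2+S(6T^2+16kR)+R^2=0$. But the proposal stops exactly where the theorem's content lies. What you actually establish is $1\le |T|\ll H^4/N^2$, which gives $H\gg N^{1/2}$. (You wrote $H^4\gg N$; it should be $H^4\gg N^2$.) That is the trivial exponent. The improvement to $3/5$ is left as ``should give,'' with a plan (a case split on $6T^2$ versus $16kR$, plus the perfect-square structure of $v^2,r^2$) that is not needed. Your ``binding case'' $|16kR|\approx 6T^2$ is also not a case: it is forced for every solution, because $m\asymp N$ is large.

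\textbf{How to close it.} Eliminate $S$ rather than $m$ between your two relations. Multiply $Rm=T(3k^2+2S)$ by $8k$ and substitute $16kS=-R-3Tm$; this gives
\[
m\,(3T^2+8kR)=T\,(24k^3-R).
\]
The factor $3T^2+8kR$ is a nonzero integer: if it vanished then, since $T\ne 0$, we would have $R=24k^3$ and hence $3T^2=-192k^4<0$. Therefore
\[
m\le |T|\bigl(24|k|^3+|R|\bigr).
\]
Now insert $|T|\le 16|k|S/(3m)$ (from your identity, since $3(u^2+w^2)\ge 3m^2/2$) and $|R|=|T|(3k^2+2S)/m$. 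This yields
\[
m^2\ll k^4S+\frac{k^2S^2(k^2+S)}{m^2}.
\]
With $|k|\ll H^2/N$, $S\le 2H^2$ and $m\asymp N$, the right side is $\ll H^{10}/N^4$. Hence $N^6\ll H^{10}$, i.e.\ $H\gg N^{3/5}$, and a small enough $c$ gives a contradiction with $m\ge 4N$.

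\textbf{Relation to the paper.} This step is the exact counterpart of the paper's key move. There the large variable is eliminated by reducing a relation quadratic in $u$ modulo the linear form $8au+9a^2-k$. That modulus then divides the nonzero number $9(k-a^2)(k^2-2ka^2+65a^4)$, giving $k^3\gg au$. In the paper's variables, your $u-w$ equals $2a$ and your $T$ equals $-4(k-a^2)$, where $k$ here is the paper's $k$ from $3v^2=4au+3q^2+4k$. Once completed, your route is somewhat cleaner. The completed-square identity avoids the paper's parity bookkeeping ($k_1=4k$). Bounding $m$ directly via $|k|\ll H^2/N$ and $S\ll H^2$ also avoids the final case split $k<10a^2$ versus $k\ge 10a^2$.
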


We did not make any attempts towards improving the exponent $3/5.$

On the opposite direction we have the following result.

\begin{theorem}
\label{thm:fourth power}
There is an absolute constant $c>0$ such that for any positive integer~$N$ the set
$$
\{n^4: N\le n \le N+cN^{12/13}\}
$$
is not a Sidon set.
\end{theorem}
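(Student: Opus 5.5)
The plan is to produce, for every large $N$, one explicit nontrivial solution of $x^4+y^4=z^4+t^4$ taken from a one-parameter polynomial family. The exponent $12/13$ suggests the following shape. We want polynomials $X_1(m),X_2(m),X_3(m),X_4(m)\in\Z[m]$ of degree $13$ with these properties:
\begin{itemize}
\item $X_1^4+X_2^4=X_3^4+X_4^4$ identically, with $\{X_1,X_2\}\neq\{X_3,X_4\}$ as polynomials;
\item all four have the same leading coefficient, so that $X_i(m)-X_j(m)=O(m^{12})$;
\item all four have positive leading coefficient.
\end{itemize}
The natural source is Euler's two-parameter solution, in which $x,y,z,t$ are binary forms of degree $7$ in $(a,b)$. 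I would substitute $a,b$ by suitable polynomials in $m$, for instance linear or quadratic polynomials chosen near a point where the four forms agree to first order. Alternatively, I would take a known parametric curve on the surface $x^4+y^4=z^4+t^4$. After removing common factors, the result should be a degree-$13$ family whose four components coincide to leading order.

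Given such a family, the argument is short. Let $m$ be the least positive integer with $\min_i X_i(m)\ge N$; this exists for $N$ large, since the leading coefficient is positive. Each $X_i$ has degree $13$, so
$$
\min_i X_i(m)\le \min_i X_i(m-1)+O(m^{12})<N+O(m^{12}),
$$
and hence $m\asymp N^{1/13}$. The spread satisfies $\max_i X_i(m)-\min_i X_i(m)=O(m^{12})=O(N^{12/13})$. It follows that all four numbers lie in $[N,N+cN^{12/13}]$. Small $N$ are absorbed by enlarging $c$, since any fixed nontrivial solution in positive integers works for bounded $N$ once $c$ is large. Finally, I would check that the solution is nontrivial for all large $m$. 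This holds because $\{X_1,X_2\}\neq\{X_3,X_4\}$ as polynomials, so equality of the sets can occur only at finitely many $m$.

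The main obstacle is constructing the degree-$13$ family with a common leading coefficient. I would first try to find $(a_0,b_0)$ at which Euler's forms satisfy $x=z$ and $y=t$, or $x=y=z=t$, up to sign. I would then substitute $a=a_0 m^k+\dots$ and $b=b_0m^k+\dots$, choosing the lower-order terms so that the leading terms of the four components cancel pairwise. I would tune the degrees so that the total degree is $13$ and the differences have degree $12$. If Euler's family does not degenerate this way, I would instead search the literature, for example Lander-type parametrizations, for a rational curve on the surface passing through a point with all four coordinates equal. Its local parametrization then gives components that agree to leading order.

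A caution on the two-parameter alternative: a family of degree $d$ whose components differ by relative order $1/m$, combined with scaling by an integer $k$, gives an excess of order $N^{d/(d+1)}$. For $d=13$ this is $N^{13/14}$, which is weaker than the claimed bound. So the scaling approach alone is not enough, and it is the one-parameter density argument above, with consecutive values spaced by $O(N^{12/13})$, that should produce the exponent $12/13$.
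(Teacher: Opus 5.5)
\textbf{There is a genuine gap: the degree-$13$ family is never produced.}

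Your reduction is correct. Given integer polynomials $X_1,\dots,X_4$ of degree $13$ with a common positive leading coefficient and $X_1^4+X_2^4=X_3^4+X_4^4$ nontrivially, your density argument proves the theorem. It is essentially the paper's final step: there all four polynomials are monic with positive $n^{12}$-coefficients, so $n=\lceil N^{1/13}\rceil$ works directly. However, the existence of such a family is the entire content of the theorem, and you only describe a search for one.

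Moreover, the route you try first cannot succeed. In Euler's degree-$7$ solution $F_1^4+F_2^4=F_3^4+F_4^4$ one has
\[
F_1=a^7+a^5b^2-2a^3b^4+3a^2b^5+ab^6,\qquad F_3=a^7+a^5b^2-2a^3b^4-3a^2b^5+ab^6 .
\]
Hence $F_1-F_3=6a^2b^5$ and $F_1+F_3=2a\bigl(a^6+b^2(a^2-b^2)^2\bigr)$. So at a real point $(a_0,b_0)\neq(0,0)$, $|F_1|=|F_3|$ forces $a_0b_0=0$, and there two of the four forms vanish.

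The four forms have no common zero except $(0,0)$. So substituting coprime polynomials $a(m),b(m)$ cancels nothing, and the top degree is $7\max(\deg a,\deg b)$, which is never $13$. The top coefficients are $F_i(a_0,b_0)$ for a real $(a_0,b_0)\neq(0,0)$ read off from the leading terms of $a(m),b(m)$. These are never all equal in absolute value, and the lower-order terms you planned to tune do not affect them. Geometrically, a family with a common leading coefficient is a rational curve on $x^4+y^4=z^4+t^4$ through a point $(\pm1:\pm1:\pm1:\pm1)$. Reparametrizing Euler's degree-$7$ curve never leaves that curve, and it contains no such point.

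The paper fills exactly this hole with a different formula of Euler (Dickson, p.~644). It has one parameter $b$, with $f=\frac{3b^2-1}{2}$ and $g=\frac{3b^4-18b^2-1}{8(b^2-1)}$, and this curve passes through $(1:1:1:1)$ at the excluded value $b=1$. Putting $b=1+2/n$ and clearing denominators gives four explicit monic integer polynomials $A,B,C,D$ of degree $13$ in $n$. Their $n^{12}$-coefficients are $14,12,12,14$, and they satisfy $A^4+B^4=C^4+D^4$. Hence all four lie in $[n^{13},n^{13}+O(n^{12})]$ for large $n$, and $n=\lceil N^{1/13}\rceil$ finishes the proof.

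Your fallback of searching for a rational curve through a point with all coordinates equal names the right object. Until such a degree-$13$ curve is actually written down and checked to be nontrivial, though, the proposal does not prove the statement.
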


In other words, for any positive integer $N$, the equation
$$
x^4+y^4=z^4+t^4, \quad x,y,z,t\in \mathbb{N}
$$
has a non-trivial solution  satisfying
$$
N\le x,y,z,t \le N+cN^{12/13}.
$$

\section{\bf {Proof of Theorem~\ref{thm:equal sum squares}}}

The fact that there exist infinitely many positive integers $N$ such that the set
$$
\Bigl\{n^2: N \le n \le N+ (8N+8)^{1/2}+2\Bigr\}
$$
is not a Sidon set immediately follows from the example given in~\cite{Gab} and credited to Alexander Kalmynin.

Let us prove that for any positive integer $N$ the set
$$
\Bigl\{n^2: N \le n < N+ (8N+8)^{1/2}+2\Bigr\}
$$
is a Sidon set.

Let $x^2 + y^2 = z^2 + t^2$.
If $x+y=z+t$ then we get the requested claim. Therefore, we can assume that $x+y\not=z+t.$ Without loss of generality we can further assume that $x+y>z+t, \, x\ge y, \, z\ge t.$
Let
$$
x=N+s_1, \quad y=N+s_2, \quad z=N+s_3, \quad t=N+s_4,
$$
where
$$
0\le s_1, s_2, s_3, s_4<\sqrt{8N+8}+2,\quad s_1\ge s_2, \quad s_3\ge s_4, \quad s_1+s_2>s_3+s_4
$$
and
\begin{equation}
\label{eqn:precise square 2N s1+s2+s3+s4=}
2N(s_1+s_2-s_3-s_4) = s_3^2+s_4^2-s_1^2-s_2^2.
\end{equation}
Clearly,  $s_1+s_2$ and $s_3+s_4$ of the same parity, so that we have
$$
s_1+s_2-s_3-s_4\ge 2.
$$
Taking this into account in the~\eqref{eqn:precise square 2N s1+s2+s3+s4=} and noting that
$$
|s_3-s_4-2|<\sqrt{8N+8},
$$
we obtain

\begin{eqnarray*}
&&4N\le 2N(s_1+s_2-s_3-s_4)\\ &&= s_3^2+s_4^2-s_1^2-s_2^2\le s_3^2 +s_4^2-\frac{(s_1+s_2)^2}{2}\\
&&\le s_3^2 +(s_4+2)^2-4s_4-4-\frac{(s_3+s_4+2)^2}{2}\\
&& = \frac{(s_3-s_4-2)^2}{2}-4s_4-4<\frac{8N+8}{2}-4 =4N,
\end{eqnarray*}
i.e. $4N<4N.$ This contradiction proves the first part of the theorem.

\section{\bf {Proof of Theorem~\ref{thm:equal sum squares strong N}}}

We shall prove that for any sufficiently large $N>N_0(\varepsilon)$ the equation
$$
x^2+y^2=z^2+t^2, \quad x,y,z,t\in \mathbb{N},
$$
has a non-trivial solution satisfying
$$
N \le x,y,z,t\le N+ \sqrt{(8+\varepsilon)N}.
$$

We assume that $N$ is a large positive integer. We take
$$
u = \lceil \sqrt {2N + 3} \rceil + 1.
$$
where $\lceil \sqrt {2N + 3}\rceil$ is the smallest integer greater than or equal to $\sqrt{2N+3}.$ Then
\begin{equation}
\label{eqn:square def u}
(u-2)^2 < 2N + 3 \le (u-1)^2.
\end{equation}

By the right hand side of inequality~\eqref{eqn:square def u}, the inequality
\begin{equation}
\label{eqn:square holds s=0, 1}
\frac {1 + u^2 - s^2}2 \ge N + u + 1
\end{equation}
holds for $s=0$ and $s=1$. Now let $s$ be
the largest integer such that $u - s \equiv 1(\bmod 2)$
and~\eqref{eqn:square holds s=0, 1} holds. By the left hand side of inequality~\eqref{eqn:square def u},
\begin{equation}
\label{eqn:square s2 le 2u-2}
s^2 \le 2u - 2.
\end{equation}

Denote
\begin{equation}
\label{eqn:square def l}
l = \frac {1 + u^2 - s^2}2.
\end{equation}
By~\eqref{eqn:square holds s=0, 1},
$$ l \ge N + u + 1.$$
By~\eqref{eqn:square def l}, we also have that
$$ (l - u -1)^2 + (l + u -1)^2 = (l-s)^2 + (l + s)^2.$$
In particular, the set
$$ \{n^2: N \le n \le l + u  -1\}$$
is not a Sidon set.
On the other hand, due to the choice of $s$,
$$ \frac {1 + u^2 - (s+2)^2}2 < N + u + 1.$$
Hence,
$$ l < N + u + 1 + 2s + 2.$$
Taking into account~\eqref{eqn:square holds s=0, 1} and~\eqref{eqn:square s2 le 2u-2}, we get
$$ 0 \le l - (N + u + 1) <4 N^{1/4}.$$
Therefore,
$$ (l + u - 1) - (N + 2u) <4 N^{1/4}.$$
Since $2u=2\lceil \sqrt {2N + 3} \rceil + 2 <(8N)^{1/2}+4,$ we conclude that
$$
l + u - 1 <  N + (8N)^{1/2} + 5 N^{1/4},
$$
finishing the proof of Theorem~\ref{thm:equal sum squares}.

\section{\bf Proof of Theorem~\ref{thm:38/3}}

In~\cite{GK}  the solvability of the equation
\begin{equation}
\label{eqn: 4cubesInTheorem}
x^3+y^3=z^3+t^3, \quad x,y,z,t\in \mathbb{N}, \quad \{x,y\}\not=\{z,t\}
\end{equation}
in short intervals was derived from the solvability of the generalized Pell equation
$$
X^2- 7Y^2 = 114.
$$
To prove Theorem~\ref{thm:38/3}  we follow the same line, however, we will arrive at the  generalized Pell equation
\begin{equation}
\label{eqn:aX^2-(a+18)Y^2}
aX^2-(a+18)Y^2 = 2a^2+36a+216,
\end{equation}
with $a\in\N,$ and  $X$ and $Y$ are integers of the same parity. In particular, we  use that for $a=1$ the equation~\eqref{eqn:aX^2-(a+18)Y^2} has infinitely many such solutions (note that for $a=3$ the equation~\eqref{eqn:aX^2-(a+18)Y^2} coincides with the above mentioned equation from~\cite{GK}).

\bigskip

\subsection{The first part of the theorem}

We will prove that the  equation~\eqref{eqn: 4cubesInTheorem} has no solutions subject to
\begin{equation}
\label{eqn:condition short interval}
N\le x,y,z,t < N+\Bigl(\frac{38}{3}N+\frac{1297}{36}\Bigr)^{1/2}+\frac{19}{6}.
\end{equation}

If $x+y=z+t,$ then we easily get that $\{x,y\}=\{z,t\}.$ So, we can assume that $x+y\not = z+t.$

Without loss of generality, we can also assume that
$$
x+y > z+t,\quad x\ge y,\quad z\ge t.
$$
Note that
$$
x+y\equiv x^3+y^3=z^3+t^3\equiv z+t\pmod 6.
$$
Therefore, $x+y=z+t+6k$ for some positive integer $k.$

\bigskip

{\bf Case 1.} Let $k\ge 2.$

Let us first show that $N\ge 14.$ Note that for any non-negative numbers $u,v$ we have
$$
u^3+v^3=\frac{(u+v)^3}{4}+\frac{3(u-v)^2(u+v)}{4},\qquad u^2+v^2\ge 2uv.
$$
We also observe that
$$
(u+v)^3= u^3+ 3uv^2 +(3u^2v+v^3)\ge u^3+(3+2\sqrt 3) v^2u>u^3+6.4uv^2.
$$
Hence,
\begin{eqnarray*}
z^3+t^3=x^3+y^3\ge \frac{(x+y)^3}{4}\ge \frac{(z+t+12)^3}{4} \\ \ge
\frac{(z+t)^3}{4}+\frac{6.4(z+t)\cdot 12^2}{4}.
\end{eqnarray*}
Then,
$$
\frac{3(z-t)^2(z+t)}{4}\ge \frac{6.4\cdot 12^2}{4}(z+t),
$$
whence $z-t>17.5$ Therefore,  the condition~\eqref{eqn:condition short interval} implies that
$$
\Bigl(\frac{38}{3}N+\frac{1297}{36}\Bigr)^{1/2}+\frac{19}{6}>z-t>17.5,
$$
from which we get $N>13.$
\bigskip

As in~\cite{GK}, we write
$$
x=N+s_1, \quad y= N+s_2, \quad z= N+s_3, \quad t = N+s_4,
$$
where the integers $s_i$ satisfy the conditions
$$
0\le s_i < \Bigl(\frac{38}{3}N+\frac{1297}{36}\Bigr)^{1/2}+\frac{19}{6}, \quad s_1+s_2-s_3-s_4\ge 12,\quad s_1\ge s_2,\quad s_3\ge s_4.
$$
Then,
$$
3(s_1+s_2-s_3-s_4)N^2 = 3N(s_3^2+s_4^2-s_1^2-s_2^2) + (s_3^3+s_4^3-s_1^3-s_2^3).
$$
It follows that
\begin{equation}
\label{eqn:s_1+s_2-s_3-s_4 }
36N^2 \le 3N(s_3^2+s_4^2-s_1^2-s_2^2) + (s_3^3+s_4^3-s_1^3-s_2^3).
\end{equation}
Let $0\le \delta\le 19/6$ be such that
$$
0\le s_3-s_4-\delta < \Bigl(\frac{38}{3}N+\frac{1297}{36}\Bigr)^{1/2}.
$$
Then
\begin{eqnarray*}
&&s_3^2+s_4^2-s_1^2-s_2^2\le s_3^2+(s_4+\delta)^2-\frac{(s_1+s_2)^2}{2} \\ && \quad \le s_3^2+(s_4+\delta)^2-\frac{(s_3+s_4+\delta + 8)^2}{2} \\ && \quad \le
s_3^2+(s_4+\delta)^2-\frac{(s_3+s_4+\delta)^2}{2}-8(s_3+s_4+\delta)
\\  && \quad = \frac{(s_3-s_4-\delta)^2}{2}-8(s_3+s_4+\delta)\\
&&\quad \le \frac{19N}{3}+\frac{1297}{72} - 8(s_3+s_4+\delta).
\end{eqnarray*}
Hence,
$$
s_3^2+s_4^2-s_1^2-s_2^2\le \frac{19N}{3} + \frac{55}{3} - 8(s_3+s_4+\delta).
$$
Inserting this into~\eqref{eqn:s_1+s_2-s_3-s_4 }, we get that

\begin{equation}
\label{eqn: s_3^2+s_4^2-s_1^2-s_2^2}
17N^2+24(s_3+s_4+\delta)N -55N \le s_3^3+s_4^3-s_1^3-s_2^3.
\end{equation}  
Similarly, taking into account the inequality
$$
(u + v)^3 = u^3 + 3uv^2 + (3u^2v + v^3) \ge u^3 + 3uv^2 + 2\sqrt{3} uv^2 \ge u^3 + 6.4 uv^2
$$
for $u \ge 0$, $v \ge 0$, we have
\begin{eqnarray*}
&&s_3^3+s_4^3-s_1^3-s_2^3\le s_3^3+s_4^3-\frac{(s_1+s_2)^3}{4} \\&& \quad \le  s_3^3+(s_4+\delta)^3-\frac{(s_3+s_4+\delta+8)^3}{4}\\ && \quad \le
s_3^3+(s_4+\delta)^3-\frac{(s_3+s_4+\delta)^3}{4}-\frac{6.4\cdot 8^2(s_3+s_4+\delta)}{4}\\ && \quad =  \frac{3(s_3-s_4-\delta)^2(s_3+s_4+\delta)}{4}-\frac{6.4\cdot 8^2(s_3+s_4+\delta)}{4}\\ && \quad \le \frac{(38N+110)(s_3+s_4+\delta)}{2}-\frac{6.4\cdot 8^2(s_3+s_4+\delta)}{4}\\&& \quad \le 19N(s_3+s_4+\delta).
\end{eqnarray*}
Since $N>13,$ we get contradiction with~\eqref{eqn: s_3^2+s_4^2-s_1^2-s_2^2}, which  finishes the case $k\ge 2.$

\bigskip

{\bf Case 2.} Let now $k=1.$

\bigskip

Denote
$$
A=x+y,\quad B=x-y, \quad C=z+t,\quad D= z-t.
$$
Then $A=C+6,$ and
\begin{equation}
\label{eqn:subst x=, y=, z=, t=}
x=\frac{C+6+B}{2}, \quad y = \frac{C+6-B}{2},\quad z=\frac{C+D}{2},\quad t=\frac{C-D}{2}.
\end{equation}
In particular, all the numbers $B, C, D$ are of the same parity. We also have that
$$
C-D=2t\ge 2N,\quad 0\le D =z-t < \Bigl(\frac{38}{3}N+\frac{1297}{36}\Bigr)^{1/2}+\frac{19}{6}.
$$

Inserting the substitution~\eqref{eqn:subst x=, y=, z=, t=} to our equation,  we get
$$
(C+6)^3+3(C+6)B^2 = C^3+3CD^2,
$$
i.e.
$$
6C^2+36C+72 +CB^2+6B^2 = CD^2.
$$
It follows that  $6B^2+72$ is divisible by $C.$
Let
$$
6B^2+72 = a_1C,\quad a_1\in\N.
$$
Then,
$$
6C+36+a_1+B^2 = D^2.
$$
Since $B$ and $D$ are of the same parity,  $a_1$ is an even number.
Letting $a_1=2a,$ we get that
\begin{equation}
\label{eqn:3B^2+36=aC}
3B^2+36 = aC,\quad a\in\N.
\end{equation}
and
\begin{equation}
\label{eqn:6C+36+2a+BB = DD}
6C+36+2a+B^2 = D^2.
\end{equation}
Multiplying by $a$ and using~\eqref{eqn:3B^2+36=aC}, we obtain that
$$
6(3B^2+36)+36a+2a^2+aB^2 = aD^2,
$$
whence
\begin{equation}
\label{eqn:aD^2-(a+18)B^2}
aD^2-(a+18)B^2 = 2a^2+36a+216.
\end{equation}

In particular,
$$
D \ge \Bigl(2a+36+\frac{216}{a}\Bigr)^{1/2}\ge (36+2\sqrt{432})^{1/2}>8,
$$
so that $D\ge 9.$
From~\eqref{eqn:3B^2+36=aC} and~\eqref{eqn:6C+36+2a+BB = DD} we also have that
$$
D^2=\frac{(a+18)C}{3}+2a+24.
$$
Hence, taking into account $2N\le 2t= C-D\le C-9,$ we get
\begin{eqnarray*}
&&\frac{(a+18)C}{3}+2a+24=D^2=(z-t)^2 \\ &&< \Bigl(\sqrt{\frac{38}{3}N+\frac{1297}{36}}+\frac{19}{6}\Bigr)^2 \le \Bigl(\sqrt{\frac{19(C-9)}{3}+\frac{1297}{36}}+\frac{19}{6}\Bigr)^2 \\&&=\frac{19(C-9)}{3}+\frac{1297}{36}+
\frac{19}{3}\sqrt{\frac{19(C-3)}{3}-38+\frac{1297}{36}} +\frac{361}{36}\\ &&
\le \frac{19C}{3}-57+\frac{1658}{36}+\frac{19}{3}\sqrt{\frac{19(C-3)}{3}}.
\end{eqnarray*}
Therefore,
$$
\frac{(a+18)C}{3}+2a+24\le \frac{19C}{3}-10+\frac{19}{3}\Bigl(\frac{19(C-3)}{3}\Bigr)^{1/2},
$$
implying
\begin{equation}
\label{eqn:2a+45 less than square root 19 cube C}
\frac{(a-1)C}{3}+2a+34\le \Bigl(\frac{19^3(C-3)}{27}\Bigr)^{1/2}.
\end{equation}
Furthermore,
$$
\frac{(a-1)C}{3}+2a+34=\frac{(a-1)(C-3)}{3}+3(a+11)\ge 2\Bigl((a-1)(a+11)(C-3)\Bigr)^{1/2}
$$
Inserting this into the preceding inequality, we get
$$
 2\Bigl((a-1)(a+11)\Bigr)^{1/2}\le \Bigl(\frac{19^3}{27}\Bigr)^{1/2}.
$$
whence $a < 6,$ i.e. $a\le 5.$

From~\eqref{eqn:2a+45 less than square root 19 cube C} we  have
$$
(a-1)^2C\le \frac{19^3}{3},
$$
and from \eqref{eqn:3B^2+36=aC} we have that
$aC >3B^2.$ Next,
\begin{equation}
\label{eqn:B vs 19 cube}
\frac{(a-1)^2}{a}B^2\le \frac{19^3}{9}.
\end{equation}

Considering the equation~\eqref{eqn:aD^2-(a+18)B^2}
modulo $5$ for $a\in\{2,5\},$ and modulo $11$ for $a=4$, we see that for $a\in\{2,4,5\}$ the equation~\eqref{eqn:aD^2-(a+18)B^2} has no solutions.

Thus, $a\in\{1,3\}$.

Let $a=3.$ From~\eqref{eqn:B vs 19 cube} we see that $B<24$ and the equation~\eqref{eqn:aD^2-(a+18)B^2} takes the form
$$
D^2-7B^2=114.
$$

The only solution of this equation with $B<24$ is $D=11, B=1$ giving $C=13,\, A=19$ and leading to the quadruple
$$
(x,y,z,t)=(10, 9, 12, 1).
$$
This however, does not satisfy the condition
$$
z<t+\sqrt{\frac{38t}{3}+\frac{1297}{36}}+\frac{19}{6}.
$$

Thus, we are only left with the case $a=1.$ It follows that
$$
D^2 = 19B^2+254,\quad C=3B^2+36.
$$
We have therefore,
\begin{eqnarray*}
\frac{19(C-D)}{3}+\frac{1297}{36}=\frac{19(3B^2+36)}{3}-\frac{19D}{3}+\frac{1297}{36}
\\=(D^2-254)+228-\frac{19D}{3}+\frac{1297}{36}=\Bigl(D-\frac{19}{6}\Bigl)^2.
\end{eqnarray*}
Since $C-D=2t\ge 2N,$ we get
\begin{eqnarray*}
D^2=(z-t)^2 <\Bigl(\sqrt{\frac{38N}{3}+\frac{1297}{36}}+\frac{19}{6}\Bigr)^2
\\ \le \Bigl(\sqrt{\frac{19(C-D)}{3}+\frac{1297}{36}}+\frac{19}{6}\Bigr)^2=D^2.
\end{eqnarray*}
Hence, we get the contradiction $D^2<D^2$
finishing the proof of the first part of Theorem~\ref{thm:38/3}.

\bigskip

\subsection{The second part of the theorem} In order to prove the second part of the theorem, we consider the equation~\eqref{eqn:aD^2-(a+18)B^2} for $a=1,$ that is, the equation
\begin{equation}
\label{eqn:D^2-19B^2=254}
D^2-19B^2=254.
\end{equation}
It has infinitely many solutions in positive integers $D$ and $B.$ Indeed, we have the initial solution $D_0=27, B_0=5$ and then an infinite series of solutions can be constructed using the solutions of the corresponding Pell equation. Thus, we define the positive integers $D$ and $B$ from
$$
D+B\sqrt{19} =(27+5\sqrt{19})\Bigl(u_1+v_1\sqrt{19}\Bigr)^m,
$$
where $m$ is any non-negative integer, $(u_1, v_1)$ is the fundamental solution (or any other solution) of the equation $u^2-19v^2=1,$ which in our case is $u_1=170, v_1=39.$ We have
$$
B\sqrt{19}<D<B\sqrt{19}+6,
$$
and $B\to\infty$ as $m\to\infty.$

Thus, following the notation of the case $k=1$ of the first part of the proof of our theorem, we consequently find
$$
C=3B^2+36, \quad A= 3B^2 +42
$$
and get that the quadruple $(x,y,z,t)$ defined by
$$
x=\frac{3B^2+B+42}{2},\quad y=\frac{3B^2-B+42}{2},\quad z= \frac{3B^2+36+D}{2}, \quad t= \frac{3B^2+36-D}{2}
$$
satisfies the equation $x^3+y^3=z^3+t^3.$  Take
\begin{equation}
\label{eqn:def of N in theorem 1}
N=\frac{3B^2+36-D}{2}.
\end{equation}
It follows that
$$
z > x > y > t =N
$$
and $N\to\infty$ as $B\to\infty.$ Noting that
$$
z= N + D =N+ \Bigl(\frac{38}{3}N+\frac{1297}{36}\Bigr)^{1/2}+\frac{19}{6},
$$
we finish the proof of the second part of Theorem~\ref{thm:38/3}.

\section{\bf {Proof of Theorem~\ref{thm:N^{2/3}}}}

We give two proofs of the theorem, which leads to different kinds of quadruples. The first proof is a consequence of the parametric formula of Euler and Binet for the solution of the equation $x^3+y^3=z^3+t^3.$ This leads to solutions with $\gcd(x,y,z,t)=1.$ In the second proof of the theorem we get quadruples with $\gcd(x,y,z,t)\gg N^{1/3}.$

According to the formula of Euler and Binet, for any rational numbers $p$ and $q$ the numbers
\begin{eqnarray*}
x=1-(p-3q)(p^2+3q^2), \quad y=-1+(p+3q)(p^2+3q^2),\\
z= p+3q-(p^2+3q^2)^2, \quad t=-(p-3q)+(p^2+3q^2)^2,
\end{eqnarray*}
satisfy our equation,  see~\cite[pages 157-158]{Dav} for the details. Letting $p=1$ and $q=1/n,$ we arrive at the proportional to the above quadruple solution
$$
x=n^3-n^2+3n, \quad y= n^3 + n^2+3n, \quad z= n^3-2n^2-3, \quad t=n^3+2n^2+3.
$$
The result now follows by taking $n=\lceil N^{1/3}\rceil+1$ or $n=\lceil N^{1/3}\rceil+2.$

\bigskip

Our second proof is as follows. From the proof of Theorem~\ref{thm:38/3}, we see that if $D_m, B_m$ and $N_m$ are such that
$$
D_m+B_m\sqrt{19} = (27+5\sqrt{19})(170+39\sqrt{19})^m, \quad N_m=\frac{3B_m^2+36-D_m}{2},
$$
then $D_m^2-19B_m^2=254$ and there exists a non-trivial solution to $x^3+y^3=z^3+t^3$ satisfying
$$
N_m\le x,y,z,t\le N_m+4\sqrt{N_m}.
$$
Clearly, $B_{m+1}\ll B_m$ and hence, $N_{m+1}\ll N_{m},$ the implied constant in $\ll$ is absolute. Therefore, for some absolute constant $c_0,$ there exists $M\in [N^{2/3}, c_0N^{2/3}]$ such that the equality
$$
x_1^3+y_1^3 = z_1^3+t_1^3
$$
holds for some positive integers $x_1,y_1,z_1,t_1$ with
$$
\quad M\le x_1,y_1,z_1,t_1 \le M+4\sqrt M,\quad \{x_1,y_1\}\not=\{z_1,t_1\}.
$$
Taking $k=\lceil N/M\rceil$ to be the smallest integer greater or equal to $N/M,$ we get
$$
(kx_1)^3+(ky_1)^3=(kz_1)^3+(kt_1)^3.
$$
It remains to note that
$$
N\le kx_1, \, ky_1, \, kz_1,\, kt_1\le \Bigl(\frac{N}{M}+1\Bigr)(M+4\sqrt M)\le N+c N^{2/3}
$$
for some absolute constant $c.$

\section{\bf{Proof of Theorem~\ref{thm:N^{4/7-}}}}

Let $M>M_0(\varepsilon)$ be a sufficiently large number and let $T<M^{1/3}.$
We will assume that for $N\in [M, 2M]$ the set of cubes $\{n^3: N \le n  \le N + \sqrt{MT}\}$ is not a Sidon set. Our aim is to prove that $T>M^{1/7-\varepsilon}.$

Observe that the interval $[M, 2M]$ contains at least $0.5\sqrt{M/T}$ disjoint subintervals of the form
$[N \le n  \le N + \sqrt{MT}].$ According to our assumption, in each of these subintervals the equation $x^3+y^3=z^3+t^3$ has at least one non-trivial solution.

We consider one such subinterval and assume that the positive integers $x,y,z,t$ are such that
\begin{equation}
\label{eqn:equal cubes 4/7}
x^3+y^3=z^3+t^3,\quad N\le x, y, z, t\le N+ \sqrt{MT},\quad \{x,y\}\not=\{z,t\}.
\end{equation}

We first claim that
$$
|x+y-z-t|\le T.
$$
Indeed, for any non-negative numbers $u,v$ we have
$$
4(u^3+v^3) = (u+v)^3 +3(u+v)(u-v)^2\ge (u+v)^3.
$$
Hence, for some $d_1\ge 0,$
$$
\Bigl(4(x^3+y^3)\Bigr)^{1/3}= x+y+d_1
$$
It then follows that
$$
4(x^3+y^3)\ge (x+y)^3+3(x+y)^2d_1
$$
whence
$$
3(x+y)^2d_1\le 3(x+y)(x-y)^2.
$$
Therefore,
$$
0\le d_1\le \frac{(x-y)^2}{x+y}\le \frac{TM}{2N}\le T.
$$
Analogously, for some $d_2\ge 0,$ we have
$$
\Bigl(4(z^3+t^3)\Bigr)^{1/3}= z+t+d_2
$$
and
$$
0\le d_2\le \frac{(z-t)^2}{z+t}\le T.
$$
Since $x^3+y^3=z^3+t^3,$ we get the bound
\begin{equation}
\label{eqn: x+y-z-t <T}
|x+y-z-t|=|d_1-d_2|\le T.
\end{equation}
We can further assume $x+y>z+t,\, x \ge y,\, z \ge t.$ As in the proof of Theorem~\ref{thm:38/3}, denote
$$
A=x+y,\quad B=x-y, \quad C=z+t,\quad D= z-t.
$$
Then $A=C+6k$ for some $k\in\N$ and
\begin{equation}
\label{eqn:substitution 4/7}
x=\frac{C+6k+B}{2}, \quad y = \frac{C+6k-B}{2},\quad z=\frac{C+D}{2},\quad t=\frac{C-D}{2}.
\end{equation}
Note that
$$
C\ge 2N,\quad 0\le B, D\le \sqrt{MT}.
$$
By\eqref{eqn: x+y-z-t <T} we also have $k\le T.$

Inserting~\eqref{eqn:substitution 4/7} into~\eqref{eqn:equal cubes 4/7}, we get
$$
6kC^2+(6k)^2C +72k^3+CB^2+6kB^2=CD^2.
$$
It follows that for some $a_1\in\N,$
\begin{equation}
\label{eqn:6kB^2+72k^3=a_1C}
6kB^2+72k^3=a_1C.
\end{equation}
Then
$$
6kC+36k^2+B^2+a_1=D^2.
$$
Multiplying by $a_1$ and taking into account~\eqref{eqn:6kB^2+72k^3=a_1C}, we obtain
\begin{equation}
\label{eqn:GenPell4/7}
a_1D^2-(a_1+36k^2)B^2=a_1^2+36k^2a_1+432k^3.
\end{equation}

Now, from~\eqref{eqn:6kB^2+72k^3=a_1C} it follows that
$$
a_1=\frac{6kB^2+72k^3}{C}\le \frac{6T^2N+72T^3}{N}\le 10T^2.
$$

Thus, $k\le T$ and  $a_1\le 10T^2.$ In particular, there are at most $10T^3$ possibilities for the number of pairs $(a_1,k).$
For each such pair, we have the generalized Pell equation~\eqref{eqn:GenPell4/7} in variables $D$ and $B.$ Recall that $0\le B, D \le \sqrt{MT}\le M.$ According to~\cite[Lemma 3.5]{VW} (see, also~\cite[Proposition 1]{CG}), for a given pair $(a_1,k),$ the number of possibilities for the pair $(D, B)$ is at most $M^{\varepsilon}.$ This implies that for the given $a_1$ and $k$ there are at most $M^{\varepsilon}$ possibilities for the quadruple  $(x,y,z,t).$

Therefore, overall we have at most $T^3M^{\varepsilon}$ possibilities for the number of non-trivial quadruples  $(x,y,z,t)$ falling into short intervals of the form $[N, N+\sqrt{MT}].$ But, as we noticed, there are at least $0.5\sqrt{M/T}$ disjoint short subintervals of the given form and, by our assumption, each of these intervals contain at least one such quadruple
$(x,y,z,t).$

Therefore,
$$
T^3M^{\varepsilon}\ge 0.5\sqrt{M/T},
$$
whence $T\ge M^{1/7-\varepsilon},$ finishing the proof of the theorem.

\section{\bf {Proof of Theorem~\ref{thm:quartic is Sidon}}}

Let positive integers $x,y,z,t$ be such that
$$
x^4+y^4=z^4+t^4,\qquad \{x,y\}\not=\{z,t\}
$$
and
$$
N\le x,y,z,t\le N+N^{3/5}.
$$
Assume that
$$
 x\ge y, \quad z\ge t,\quad x+y\ge z+t.
$$
Our aim is to prove that $z-t\ge cN^{3/5}$ for some absolute constant $c>0.$

We can assume that $N$ is sufficiently large.

Note that $x+y\not=z+t.$ Indeed otherwise, denoting $x+y=z+t=h$ and assuming
$x\ge h/2,\, z\ge h/2$ we get
$$
x^4+(h-x)^4=z^4+(h-x)^4.
$$
The function $f(X)=X^4+(h-X)^4$ is strictly increasing in the interval $X\in [h/2, h]$ as  $f'(X)=4X^3-4(h-X)^3>0$ for $X\in (h/2, h).$ Hence, we get that $x=z.$

Thus, we can assume that
$$
x+y>z+t, \quad x\ge y, \quad z\ge t.
$$
Since $x+y\equiv z+t\pmod 2,$ we also have $x+y = z+t+2a$ for some positive integer~$a.$
Let
$$
p=x+y,\quad q= x-y,\quad u=z+t,\quad v=z-t.
$$
Then
\begin{equation}
\label{eqn:v vs u in 8/3}
p = u+2a,\quad u\ge 2N,\quad 0\le v \le N^{3/5}
\end{equation}
and
$$
x=\frac{p+q}{2}=\frac{u+2a+q}{2},\quad y=\frac{p-q}{2},\quad z=\frac{u+v}{2},\quad t=\frac{u-v}{2}.
$$
Substituting this in our equation, we see that
\begin{equation}
\label{eqn:8/3 after substitution}
(u+2a)^4+6(u+2a)^2q^2+q^4= u^4+6u^2v^2+v^4.
\end{equation}
Expanding the expression and cancelling $u^4$, we get
$$
8au^3+24a^2u^2+32a^3u+16a^4+6(u^2+4au+4a^2)q^2+q^4= 6u^2v^2+v^4.
$$
It follows that $v^2>4au/3,$ as otherwise the quantity on the right hand side would be smaller than the one on the left hand side. Thus, for some positive integer $m$ we have that
$$
3v^2=4au+m.
$$
Then,
\begin{eqnarray*}
8au^3+24a^2u^2+32a^3u+16a^4+6(u^2+4au+4a^2)q^2+q^4 \\= 2u^2(4au+m)+\frac{(4au+m)^2}{9}.
\end{eqnarray*}
Cancelling the terms $8au^3$ and then multiplying  by $9$ we get
\begin{eqnarray*}
9\cdot 24a^2u^2+9\cdot 32a^3u+9\cdot 16a^4+9\cdot 6(u^2+4au+4a^2)q^2+9q^4\\ = 18mu^2+(16a^2u^2+8amu+m^2),
\end{eqnarray*}
whence
\begin{eqnarray*}
9\cdot 32a^3u+9\cdot 16a^4+54(u^2+4au+4a^2)q^2+9q^4\\=
(18m-200a^2)u^2+ 8amu +m^2.
\end{eqnarray*}
Clearly, $m>3q^2,$ as otherwise the expression on the right hand side would be smaller than the one on the left hand side.
Thus, let
$$
m=3q^2+k_1,\quad k_1\in\N
$$
Substituting in our equation and cancelling $54q^2u^2$ and $9q^4$ we get
\begin{eqnarray*}
&&9\cdot 32a^3u+9\cdot 16a^4+54(4au+4a^2)q^2\\ && \quad =
(18k_1-200a^2)u^2+8au(3q^2+k_1) +(6k_1q^2+k_1^2).
\end{eqnarray*}
It follows that $k_1$ is even, and since $3v^2=4au+3q^2+k_1$ we get that $k_1$ is actually divisible by $4.$ Thus, we take $k_1=4k$ and after cancelling by $8,$ we get
$$
36a^3u+18a^4+27(au+a^2)q^2=(9k-25a^2)u^2+au(3q^2+4k)+(3kq^2+2k^2).
$$
Now collecting all the terms with $q^2$ on the right, and the remaining terms on the left, we arrive at the equation
\begin{equation}
\label{eqn:terms with q2 right the rest left}
3q^2(8au+9a^2-k)= (9k-25a^2)u^2+4au(k-9a^2)+2k^2-18a^4.
\end{equation}
We also recall that
$$
3v^2=4au+3q^2+4k \gg au+q^2.
$$
In particular, we can assume that $a<u^{1/3},$ as otherwise there is nothing to prove. We also recall that $u\ge 2N$ is sufficiently large.

If $k>au,$ then $k>100a^2, \, 9k-25a^2>au,$ and we get
$$
3q^2(8au+9a^2-k) < 24q^2au, \quad (9k-25a^2)u^2+4au(k-9a^2)+2k^2-18a^4> au^3.
$$
Therefore, in this case from~\eqref{eqn:terms with q2 right the rest left} we get $q^2\gg u^2,$ which implies even a stronger bound than the claim of the theorem.

Otherwise, $k\le au$ and in~\eqref{eqn:terms with q2 right the rest left} we pass to the congruence and get that
$$
(9k-25a^2)u^2+4au(k-9a^2)+2k^2-18a^4 \equiv 0\pmod{8au+9a^2-k}.
$$
We multiply this congruence by $64a^2$ and substitute
$$
8au\equiv k-9a^2 \pmod {8au+9a^2-k}.
$$
Then,
\begin{eqnarray*}
(9k-25a^2)(k-9a^2)^2+32a^2(k-9a^2)^2+128a^2(k^2-9a^4)\\ \equiv 0\pmod {8au+9a^2-k}.
\end{eqnarray*}
Therefore,
\begin{equation}
\label{eqn:dioph eq quartics k and a}
(9k+7a^2)(k-9a^2)^2+128a^2(k^2-9a^4)\equiv 0\pmod {8au+9a^2-k}.
\end{equation}
Decomposing the left hand side into a product, we get
$$
9(k-a^2)(k^2-2ka^2+65a^4)\equiv 0\pmod {8au+9a^2-k}.
$$
From~\eqref{eqn:terms with q2 right the rest left} it follows that $k>a^2.$ Therefore, since $k^2-2ka^2+65a^4>0,$ we get that the left hand side of the congruence~\eqref{eqn:dioph eq quartics k and a} as a number is distinct from $0$ and positive. Hence,
$$
1000k^3 > 9(k-a^2)(k^2-2ka^2+65a^4) \ge 8au+9a^2-k >au.
$$

Thus, we have that
\begin{equation}
\label{eqn:ineq for k,a,u}
k^3\gg au,
\end{equation}
where $L\gg M$ means that $L\ge c_0 M$ for some absolute constant $c_0>0.$

If $k<10a^2,$ then $a^6\gg k^3\gg au$ and then $a\gg u^{1/5}$ and the result follows from
$$
v^2\gg au +q^2 \gg u^{6/5}.
$$

If $k\ge 10a^2,$ then recalling that $k\le au,$ from~\eqref{eqn:terms with q2 right the rest left} we get $q^2au\gg ku^2$ and thus
$$
q^2\gg \frac{ku}{a}.
$$
Using the last inequality and (\ref{eqn:ineq for k,a,u}) we get
$$
v^2\gg au+q^2\gg au+\frac{ku}{a}\gg au + \frac{u^{4/3}}{a^{2/3}}\gg u^{6/5}.
$$

\section{\bf {Proof of Theorem~\ref{thm:fourth power}}}

Our proof is based on Euler's parametric solution to the equation
\begin{equation}
\label{eqn:A4+B4=C4+D4 proof Theorem}
A^4+B^4=C^4+D^4.
\end{equation}
His solution is described in~\cite[p. 644]{LED}. Let $b^2\not=1.$
Define
$$
f=\frac{3b^2-1}{2}, \qquad g=\frac{3b^4-18b^2-1}{8(b^2-1)},\qquad
z=\frac{b^2(b^2-4)-2fg}{b^2+g^2}.
$$
Put
\begin{align*}
p=&a(b^2-1-z), \quad &q=ab(b^2-1+fz+gz^2), \\
r=&ab(1+z)(b^2-1-z), \quad &s=a(b^2-1+fz+gz^2).
\end{align*}
Euler proved that the following quadruple $(A,B,C,D)$ satisfy the equation~\eqref{eqn:A4+B4=C4+D4 proof Theorem}:
\begin{align*}
A= p+q,\quad
B= r-s,\quad
C= r+s ,\quad
D= p-q.
\end{align*}

In Euler's formula we take $b=1+\frac{2}{n}$ and eventually arrive at the following solution:

\begin{align*}
A= n^{13} + 14n^{12} + 92n^{11} + 370n^{10} + 1002n^9 + 1908n^8 + 2611n^7 + 2609n^6
+1944n^5 +\\+ 1101n^4 + 458n^3 + 115n^2 + 7n - 1,\\\\
B = n^{13} + 12n^{12} + 65n^{11} + 204n^{10} + 384n^9 + 360n^8 - 122n^7 - 867n^6  -1266n^5 - \quad\,\,\, \\-1023n^4 - 493n^3 - 135n^2 - 20n - 3,\\\\
C = n^{13} + 12n^{12} + 68n^{11} + 246n^{10} + 642n^9 + 1272n^8 + 1939n^7 + 2265n^6 + 1992n^5 +\\+ 1263n^4 + 530n^3 + 129n^2 + 19n + 3,\\\\
D = n^{13} + 14n^{12} + 89n^{11} + 346n^{10} + 924n^9 + 1776n^8 + 2470n^7 + 2423n^6 + 1578n^5 +\\+ 591n^4 + 59n^3 - 47n^2 - 20n - 1.
\end{align*}

\bigskip

The claim of the theorem now follows by taking  $n=\lceil N^{1/13}\rceil.$

\bigskip

{\bf Acknowledgement.}

The authors are thankful to Mikhail Gabdullin for useful discussions.

The work of the third author was supported by the Russian Science Foundation under grant
no. 22-11-00129, https://rscf.ru/project/ 22-11-00129, and performed at Lomonosov Moscow State University.

\end{document}